\newtheorem{theorem}{Theorem}[section]
\newtheorem{corollary}[theorem]{Corollary}
\newtheorem{lemma}[theorem]{Lemma}
\theoremstyle{definition}
\numberwithin{equation}{section}
\newcommand{\SL}{\mathrm{SL}}
\newcommand{\PSL}{\mathrm{PSL}}
\newcommand{\AGL}{\mathrm{AGL}}
\newcommand{\PGL}{\mathrm{PGL}}
\newcommand{\PGaL}{\mathrm{P\Gamma L}}
\newcommand{\A}{\mathrm{A}}
\renewcommand{\S}{\mathrm{S}}
\newcommand{\D}{\mathrm{D}}
\newcommand{\Aut}{\mathrm{Aut}}
\newcommand{\PG}{\mathrm{PG}}
\newcommand{\M}{\mathrm{M}}
\newcommand{\Dmc}{\mathcal{D}}
\newcommand{\Bmc}{\mathcal{B}}
\newcommand{\Pmc}{\mathcal{P}}
\newcommand{\Omc}{\mathcal{O}}
\renewcommand{\leq}{\leqslant}
\renewcommand{\geq}{\geqslant}
\newcommand{\imod}[1]{\allowbreak\mkern4mu({\operator@font mod}\,\,#1)}
\begin{document}
\title[Alternating groups and $2$-designs]{Alternating groups as flag-transitive automorphism groups of $2$-designs with block size seven}

\author[A. Daneshkhah]{Ashraf Daneshkhah}
\address{Ashraf Daneshkhah, Department of Mathematics, Faculty of Science, Bu-Ali Sina University, Hamedan, Iran.
}
\email{adanesh@basu.ac.ir}
\email{daneshkhah.ashraf@gmail.com}

\subjclass[]{05B05; 05B25; 20B25}%
\keywords{$2$-design, almost simple group, flag-transitive, point-primitive, automorphism group.}
\date{\today}%

\begin{abstract}
In this article, 
we show that if $\Dmc$ is a $2$-design with block size $7$ admitting flag-transitive almost simple automorphism group with socle an alternating group, then $\Dmc$ is $\PG_{2}(3,2)$ with parameter set $(15,7,3)$ and $G=\A_7$, or $\Dmc$ is the $2$-design with parameter set $(55, 7, 1680)$ and $G=\A_{11}$ or $\S_{11}$. 
\end{abstract}

\maketitle

\section{Introduction}\label{sec:intro}

The $2$-$(v,k,\lambda)$ designs with highly symmetries have been of most interest during last decades, in particular, flag-transitive $2$-designs. There have been numerous contribution to classify flag-transitive $2$-designs with $\lambda=1$ which are also known as Steiner $2$-designs or linear spaces, and in conclusion, a classification of such incidence structures has been announced in 1990 \cite{a:BDDKLS90}. There are several interesting results with restriction on the certain parameters of $2$-designs, see for example \cite{a:ABFGMRTZ-CP,a:ABCD-PrimeRep,a:ABD-PrimeLam,a:ACD-PrimeRep-class,a:ADM-PrimeLam-An,a:Zhou-SqrRep-An,a:Zhou-PrimLam-An}.

In this paper, we are interested in studying flag-transitive $2$-designs with small block size $k$. If $k$ is small, then we have several well-known examples of flag-transitive $2$-designs. Steiner triple designs are $2$-$(v,3,1)$ designs which have been extensively studied, see \cite[ch II.2]{b:Handbook}.  For $k=4$, Zhan, Zhou and Chen \cite{a:smallk-4-PA} proved that a flag-transitive automorphism group of a $2$-$(v,4,\lambda)$ design  is point-primitive of affine, almost simple or product type, and they obtained all such possible designs with product type automorphism groups. The almost simple case for $k=4$ or $5$ when the socle is respectively $\PSL_{2}(q)$ or a sporadic simple group have been treated, see \cite{a:smallk-5-spor,a:smallk-4-PSL2}. The $2$-designs with block size $6$  admitting flag-transitive and point-imprimitive automorphism groups have been determined in \cite{a:smallk-6-Imp}. We note that if $G$ is a $2$-homogeneous automorphism group on the point-set $\Pmc$ and $B$ is a $k$-subset of $\Pmc$ with $k\geq 2$, then $\Dmc=(\Pmc,B^{G})$ is a $2$-design, and if moreover, $B$ is an orbit of a subgroup of $G$, then $G$ is flag-transitive on $\Dmc$. 
Therefore, we mainly focus on the case where $G$ is not $2$-homogeneous. It follows immediately from \cite[Theorem 1.4]{a:Zhou-kprime} that a flag-transitive automorphism group $G$ of a $2$-design with $k$ prime must be point-primitive, and it is of affine, or almost simple type.
We in particular focus on the case where $G$ is an almost simple group with socle an alternating group and obtain all possible flag-transitive $2$-designs with $k=7$: 

\begin{theorem}\label{thm:alt}
    Let $\Dmc$ be a nontrivial $2$-$(v,7,\lambda)$ design, and let $G$ be a flag-transitive automorphism group of $\Dmc$. If $G$ is point-primitive of almost simple type with socle an alternating group $\A_{c}$ with $c\geq 5$, then one of the following holds:
    \begin{enumerate}[\rm (a)]
    	\item $\Dmc$ is $\PG_{2}(3,2)$ with parameter set $(15,7,3)$ and $G=\A_7$  with point-stabiliser $\PSL_3(2)$;
    	\item $\Dmc$ is a $2$-design with parameter set $(55, 7, 1680)$ and $G=\A_{11}$ or $\S_{11}$ with point-stabiliser $\S_{9}$ or $2\times \S_{9}$, respectively. 
    \end{enumerate}
\end{theorem}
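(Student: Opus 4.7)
The plan is to enumerate the possibilities for the point stabiliser $H=G_{x}$ — a maximal subgroup of $G$, by point-primitivity — and then to use the basic flag-transitive parameters to rule out all but the two listed cases. With $k=7$, the identity $r(k-1)=\lambda(v-1)$ reads $6r=\lambda(v-1)$, and flag-transitivity forces $G_{x}$ to act transitively on the $r$ blocks through $x$, so $r\mid |H|$. Together with $v=|G:H|$ this gives the key inequality
\[
|G|\le |H|(6|H|+1)<7|H|^{2},
\]
which restricts $|H|$ from below.

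By the O'Nan--Scott theorem, a maximal subgroup $H$ of $G$ (with $\A_{c}\le G\le \S_{c}$) is of one of three types: (i) intransitive $(\S_{m}\times \S_{c-m})\cap G$, yielding $v=\binom{c}{m}$; (ii) transitive imprimitive $(\S_{a}\wr \S_{b})\cap G$ with $c=ab$; or (iii) primitive. For (iii), Liebeck's bound $|H|\le 4^{c}$ combined with $|G|<7|H|^{2}$ forces $c$ below a small constant, and the few remaining primitive maximal subgroups of the small alternating and symmetric groups can be checked individually against the \textsc{Atlas}; the only survivor is $G=\A_{7}$ with $H=\PSL_{3}(2)$ on $15$ points, producing exactly design~(a). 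Type (ii) is eliminated by the same inequality together with the divisibility $r\mid |H|$, $r\ge 7$. Hence the bulk of the work concerns (i), the $m$-subset action. For $m=1$ the action is the natural $2$-transitive action on $c$ points, whose only $G$-invariant $2$-design with blocks of size $7$ is the complete one, excluded by the nontriviality hypothesis. For $m\ge 3$, the inequality rewrites as $\binom{c}{m}^{2}<7\cdot c!/2$, which limits $c$ sharply, and each residual $(c,m)$ is eliminated by showing that no integer $r\ge 7$ simultaneously divides $|H|$, satisfies the numerical relation $6r=\lambda(v-1)$, and admits a $7$-subset of $m$-subsets with transitive $G$-stabiliser.

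The delicate subcase is (i) with $m=2$, where points are edges of $K_{c}$ and $v=\binom{c}{2}$. Flag-transitivity translates into the existence of a $7$-edge subgraph $B$ of $K_{c}$ whose $G$-stabiliser permutes its seven edges transitively; the plan is to enumerate the $\S_{c}$-orbits on such subgraphs, identify those whose stabiliser can be transitive on the edges, and combine this with the arithmetic $r=\lambda(v-1)/6\in\Zbb_{\ge 7}$ and $r\mid |H|$. Only $c=11$ survives, yielding example~(b) with $(v,k,\lambda)=(55,7,1680)$ on the $55$ pairs from $\{1,\dots,11\}$. The principal obstacle is this $m=2$ subcase: the inequality alone admits a long tail of admissible $c$, and cutting the list down requires a careful interplay between Zieschang-style divisibility constraints on $r$ and the structural classification of stabilisers of seven-edge subgraphs of $K_{c}$, both for $\A_{c}$ and $\S_{c}$.
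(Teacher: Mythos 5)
Your overall skeleton (the maximal-subgroup trichotomy for $\A_c\le G\le \S_c$, together with $r\mid |H|$ and $6r=\lambda(v-1)$) matches the paper's, and your treatment of the intransitive case with $m=2$ via edge-transitive $7$-edge subgraphs of $K_c$ (only $C_7$, $K_{1,7}$ and $7K_2$ qualify) combined with balancing $\lambda$ over the two $G$-orbits of point-pairs is a workable and rather elegant alternative to the paper's argument, which instead bounds $c\le 23$ by a subdegree condition. The serious gap is elsewhere. Your only quantitative tool is $|G|\le |H|(6|H|+1)<7|H|^2$, i.e. $v<7|H|$, which is a \emph{lower} bound on $|H|$ and therefore bites only when $H$ is small, that is, in the primitive case. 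For an intransitive stabiliser $H=(\S_m\times\S_{c-m})\cap G$ one has $6|H|+1\ge 3(c-m)!$, which exceeds $v=\binom{c}{m}$ for every $c$; for $H=(\S_s\wr\S_t)\cap G$ with $s=2$ the condition reduces to $\binom{2t}{t}<7\cdot 4^t$, which always holds. So your claims that ``$\binom{c}{m}^2<7\cdot c!/2$ limits $c$ sharply'' for $m\ge 3$ and that type (ii) is ``eliminated by the same inequality'' are false: the inequality imposes no restriction whatsoever in those cases, and the residual divisibility $(v-1)/\gcd(6,v-1)\mid r\mid |H|$ does not terminate the analysis either, since $|H|$ is divisible by $(c-m)!$ and hence by every prime up to $c-m$.

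What is missing is the paper's key lemma (Lemma~\ref{lem:six}(b) and Corollary~\ref{cor:subdeg}): $r$ divides $\lambda d$ for every nontrivial subdegree $d$, whence $v-1$ divides $\gcd(6,v-1)\cdot d$. In the $m$-set action one subdegree equals $m(c-m)$ (up to a factor $2$), giving $\binom{c}{m}\le 6m(c-m)+1$ and hence $m\le 3$ with a short finite list of $c$; in the imprimitive action the relevant subdegree is $s^2\binom{t}{2}$ (or $t(t-1)$ when $s=2$), giving $t^{(s-1)(t-1)}\le 3s^2t(t-1)$. Without this, or an equivalent substitute, your cases (i) with $m\ge 3$ and (ii) remain open for infinitely many $c$. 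Two smaller points: in the primitive case the Praeger--Saxl bound $|H|<4^c$ against $c!/2<7\cdot 16^c$ only forces $c\le 40$ or so, and even then pairs such as $(\S_8,\PGL_2(7))$, $(\A_9,\PGaL_2(8))$, $(\A_{11},\M_{11})$ and $(\A_{12},\M_{12})$ survive the inequality and must still be excluded by showing no admissible $(r,\lambda)$ exists; and the two examples in the statement, having $v<100$, are most cleanly obtained by citing the existing small-$v$ classifications, as the paper does.
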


In order to prove Theorem~\ref{thm:alt}, for the case where $v< 100$, by \cite{a:Tang-v100}, we obtain the $2$-designs in the statement. Then we assume that $v\geq 100$, and in this case, we show that there is no $2$-$(v,7,\lambda)$ deign admitting flag-transitive and point-primitive automorphism group $G$. Here, we first observe that the point-stabiliser $H$ of $G$ has to be large, that is to say, $|G|\leq |H|^{3}$. The possibilities for $H$ can be read off from \cite{a:AB-Large-15}. In Section \ref{sec:thm}, we examine these possibilities and prove our desired result.

\subsection{Definitions and notation}\label{sec:defn}
All groups and incidence structures in this paper are finite. A group $G$ is said to be \emph{almost simple} with socle $X$ if $X\unlhd G\leq \Aut(X)$, where $X$ is a nonabelian simple group. Symmetric and alternating groups on $c$ letters are denoted by $\S_{c}$ and $\A_{c}$, respectively. We write ``$n$'' for group of order $n$. 
A $2$-$(v,k,\lambda)$ design $\Dmc$ is a pair $(\Pmc,\Bmc)$ with a set $\Pmc$ of $v$ points and a set $\Bmc$ of $b$ blocks such that each block is a $k$-subset of $\Pmc$ and each pair of distinct points is contained in exactly $\lambda$ blocks. We say that $\Dmc$ is nontrivial if $2 < k < v-1$. A \emph{flag} of $\Dmc$ is a point-block pair $(\alpha, B)$ such that $\alpha \in B$. An \emph{automorphism} of $\Dmc$ is a permutation on $\Pmc$ which maps blocks to blocks and preserving the incidence. The \emph{full automorphism} group $\Aut(\Dmc)$ of $\Dmc$ is the group consisting of all automorphisms of $\Dmc$.  For $G\leq \Aut(\Dmc)$, $G$ is called \emph{flag-transitive} if $G$ acts transitively on the set of flags. The group $G$ is said to be \emph{point-primitive} if $G$ acts primitively on $\Pmc$. 
For a given positive integer $n$ and a prime divisor $p$ of $n$, we denote the $p$-part of $n$ by $n_{p}$, that is to say, $n_{p}=p^{t}$ with $p^{t}\mid n$ but $p^{t+1}\nmid n$. Further notation and definitions in both design theory and group theory are standard and can be found, for example in~\cite{b:Beth-I,b:Dixon}.

\section{Preliminaries}\label{sec:pre}

In this section, we state some useful facts in both design theory and group theory. 

\begin{lemma}\label{lem:param}
    Let $\Dmc$ be a $2$-design with parameter set $(v,k,\lambda)$. Then
    \begin{enumerate}[\rm (a)]
        \item $r(k-1)=\lambda(v-1)$;
        \item $vr=bk$;
        \item $v\leq b$ and $k\leq r$;
        \item $\lambda v<r^2$.
    \end{enumerate}
\end{lemma}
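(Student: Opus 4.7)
The plan is to establish (a)--(d) by a combination of elementary double counting and the classical linear algebra argument underlying Fisher's inequality; everything here is standard and can be found, for instance, in \cite{b:Beth-I}.

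For (a), I would fix a point $\alpha \in \Pmc$ and count ordered pairs $(\beta, B)$ with $\alpha \neq \beta$ and $\{\alpha,\beta\} \subseteq B$ in two ways. Summing over the $r$ blocks through $\alpha$, each of which contains $k-1$ further points, gives $r(k-1)$; summing over the $v-1$ choices of $\beta$, each paired with $\alpha$ in exactly $\lambda$ blocks, gives $\lambda(v-1)$. For (b), the analogous double count of incident point-block pairs yields $vr$ from the point side and $bk$ from the block side.

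Part (c) is the main step and the principal obstacle. I would form the $v \times b$ point-block incidence matrix $N$ and observe that
\[
NN^{T} = (r-\lambda)\,I_{v} + \lambda\,J_{v},
\]
whose eigenvalues are $r-\lambda$ (with multiplicity $v-1$) and $r+\lambda(v-1)$. Because (a) gives $r/\lambda = (v-1)/(k-1)$ and nontriviality forces $v>k$, we have $r > \lambda$, so $NN^{T}$ is positive definite and hence nonsingular; consequently $\operatorname{rank}(N) = v$, which yields $b \geq v$. Dividing the identity $vr = bk$ from (b) by $v$ then produces $r \geq k$.

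Finally, for (d), I would rewrite (a) as $\lambda v = \lambda + r(k-1)$, so that
\[
r^{2} - \lambda v = r(r-k+1) - \lambda.
\]
Part (c) supplies $r - k + 1 \geq 1$, while the inequality $r > \lambda$ noted above gives $r(r-k+1) \geq r > \lambda$, so $r^{2} > \lambda v$. The only genuinely non-elementary ingredient is Fisher's inequality in (c); once it is in hand, both (c)'s second assertion and (d) reduce to short algebraic manipulations of (a) and (b).
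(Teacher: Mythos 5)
Your proposal is correct and follows the same skeleton as the paper's proof: double counting for (a) and (b), Fisher's inequality for (c), and algebraic manipulation for (d). Two small differences are worth recording. First, where the paper simply cites Fisher's inequality, you reprove it via the identity $NN^{T}=(r-\lambda)I_{v}+\lambda J_{v}$ and the rank argument; this is the standard proof and is self-contained, but note that positive definiteness (equivalently $r>\lambda$) requires $v>k$, so like the paper's citation it implicitly uses nontriviality of the design. Second, your derivation of (d), namely $r^{2}-\lambda v=r(r-k+1)-\lambda\geq r-\lambda>0$, is actually more careful than the chain printed in the paper, which reads $r^{2}>r(k-1)=\lambda(v-1)>\lambda v$; the final step there is false as written (since $\lambda(v-1)<\lambda v$) and is evidently a slip, whereas your argument correctly isolates the needed inequality $\lambda<r(r-k+1)$ and justifies it from $r\geq k$ and $r>\lambda$. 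So your write-up is not only equivalent in approach but repairs the one defective step in the published argument.
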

\begin{proof}
    Parts (a) and (b) follow immediately by simple counting. The inequality $v\leq b$ is the Fisher's inequality \cite[p. 57]{b:Dembowski}, and so by applying part (b), we have that $k\leq r$. By part (a) and (c), we easily observe that  $r^{2}>r(k-1)=\lambda(v-1)>\lambda v$, and so $\lambda v<r^{2}$, as desired.
\end{proof}

If a group $G$ acts transitively on a set $\Pmc$ and $\alpha\in \Pmc$, the \emph{subdegrees} of $G$ are the length of orbits of the action of the point-stabiliser $G_\alpha$ on $\Pmc$.

\begin{lemma}\label{lem:six}
    Let $\Dmc$ be a $2$-design with parameter set $(v,k,\lambda)$, and let $\alpha$ be a point of $\Dmc$. If $G$ a flag-transitive automorphism group of $\Dmc$, then
    \begin{enumerate}[\rm (a)]
        \item $r\mid |G_{\alpha}|$;
        \item $r\mid \lambda d$, for all nontrivial subdegrees $d$ of $G$.
    \end{enumerate}
\end{lemma}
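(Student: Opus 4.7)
For part (a), the plan is to exploit flag-transitivity directly. Since $G$ is transitive on flags, the stabiliser $G_\alpha$ is transitive on the set of blocks incident with $\alpha$, a set of size $r$. The orbit–stabiliser theorem then gives $r \mid |G_\alpha|$ immediately. No further input is needed.

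For part (b), I would set up a double count. Let $\Delta$ be a nontrivial orbit of $G_\alpha$ on $\Pmc$, say of length $d$, with $\alpha \notin \Delta$. Consider the set of pairs
\[
    \Omega = \{(B,\beta) : \alpha \in B,\ \beta \in B \cap \Delta\}.
\]
Counting $\Omega$ by the second coordinate: for each $\beta \in \Delta$, the 2-design property says that $\alpha$ and $\beta$ lie together in exactly $\lambda$ blocks, hence $|\Omega| = \lambda d$. Counting $\Omega$ by the first coordinate: by the argument of part (a), $G_\alpha$ is transitive on the $r$ blocks through $\alpha$, and since $\Delta$ is $G_\alpha$-invariant, the cardinality $|B \cap \Delta|$ is the same constant $x$ for every block $B$ through $\alpha$. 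Hence $|\Omega| = r x$. Equating the two counts gives $r x = \lambda d$, and so $r \mid \lambda d$.

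Both parts reduce to the single observation that $G_\alpha$ acts transitively on the blocks through $\alpha$, which is essentially a restatement of flag-transitivity at a fixed point. I do not foresee a genuine obstacle: the only subtlety worth flagging is that in part (b) one must verify that $G_\alpha$ permutes the intersections $B \cap \Delta$ in a way that forces $|B \cap \Delta|$ to be constant on blocks through $\alpha$; this follows because $G_\alpha$ stabilises $\Delta$ setwise and acts transitively on these blocks, so the map $B \mapsto |B \cap \Delta|$ is $G_\alpha$-invariant and thus constant.
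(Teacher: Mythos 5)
Your proof is correct and matches the paper: part (a) is the same orbit--stabiliser argument from flag-transitivity. For part (b) the paper simply cites Davies (1987); your double count of the pairs $(B,\beta)$ with $\alpha\in B$ and $\beta\in B\cap\Delta$, together with the observation that $|B\cap\Delta|$ is constant over blocks through $\alpha$ because $G_\alpha$ permutes these blocks transitively and fixes $\Delta$ setwise, is exactly the standard argument behind that citation, so you have supplied a proof the paper omits.
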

\begin{proof}
    Since $G$ is flag-transitive, the point-stabiliser $G_{\alpha}$ is transitive on the  set of all blocks containing $\alpha$, and so $r=|G_{\alpha}:G_{\alpha,B}|$. Thus $r$ divides $|G_{\alpha}|$.  Part (b) is proved in \cite[p. 9]{a:Davies-87}.
\end{proof}

\begin{corollary}\label{cor:subdeg}
    Let $\Dmc=(\Pmc,\Bmc)$ be a   $2$-$(v,k,\lambda)$ design with $\alpha\in \Pmc$ admitting a flag-transitive automorphism group $G$. Then $v-1$ divides $\gcd(k-1,\lambda(v-1)) d$, for all nontrivial subdegrees $d$ of $G$. Moreover, if $H=G_{\alpha}$, then 
    \begin{align}\label{eq:order}
        |G|\leq \gcd(k-1,\lambda(v-1))|H|^{2}+|H|.
    \end{align}
\end{corollary}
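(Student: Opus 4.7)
The plan is to combine Lemma~\ref{lem:param}(a) with Lemma~\ref{lem:six}(b) to translate the divisibility $r\mid \lambda d$ into a divisibility condition on $v-1$, and then to exploit the fact that subdegrees are bounded by $|H|$ to extract the stated inequality on $|G|$.

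First I would establish the divisibility claim. By Lemma~\ref{lem:six}(b), $r\mid \lambda d$ for every nontrivial subdegree $d$. Multiplying this by $v-1$ and using Lemma~\ref{lem:param}(a) in the form $\lambda(v-1)=r(k-1)$, I get
\begin{equation*}
r(v-1)\,\bigm|\,\lambda d(v-1)=r(k-1)d,
\end{equation*}
so $(v-1)\mid (k-1)d$. Setting $g=\gcd(k-1,v-1)$ and writing $k-1=gk'$, $v-1=gv'$ with $\gcd(k',v')=1$, I deduce $v'\mid k'd$ and hence $v'\mid d$. Equivalently, $(v-1)\mid g\cdot d$, which is exactly the first assertion.

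Next I would derive the inequality. Since $d$ is the length of an orbit of $H=G_\alpha$ on $\Pmc$, we have $d\leq |H|$. Combining this with the divisibility just obtained gives
\begin{equation*}
\frac{v-1}{g}\leq d\leq |H|,\qquad\text{hence}\qquad v-1\leq g\,|H|=\gcd(k-1,v-1)\,|H|.
\end{equation*}
Multiplying by $|H|$ and using $|G|=v|H|$ yields
\begin{equation*}
|G|=v|H|=(v-1)|H|+|H|\leq \gcd(k-1,v-1)\,|H|^{2}+|H|,
\end{equation*}
which is~\eqref{eq:order}.

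I do not expect a genuine obstacle here; the only point that needs attention is the existence of at least one nontrivial subdegree, which is automatic because $\Dmc$ is nontrivial (so $v>k>2>1$ and $H\neq G$). Everything else is a short arithmetic manipulation once Lemmas~\ref{lem:param} and~\ref{lem:six} are in hand.
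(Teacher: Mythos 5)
Your proof is correct and follows essentially the same route as the paper: Lemma~\ref{lem:six}(b) combined with $\lambda(v-1)=r(k-1)$ to obtain $(v-1)\mid(k-1)d$ and hence $(v-1)\mid\gcd(k-1,v-1)\,d$, then $d\leq|H|$ and $|G|=v|H|$ for the inequality. If anything, your explicit cancellation of $r$ is the more careful rendering: the paper's intermediate assertion that $\lambda(v-1)/\gcd(k-1,v-1)$ divides $r$ is not correct as printed (for $(v,k,\lambda)=(15,7,3)$ one has $r=7$ while $\lambda(v-1)/\gcd(6,14)=21$), whereas your argument reaches the stated conclusion without that step.
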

\begin{proof}
    We know by Lemma~\ref{lem:param} that $\lambda(v-1)=r(k-1)$. Then $\lambda(v-1)/\gcd(k-1,\lambda(v-1))$ divides $r$. Since $\Dmc$ is flag-transitive, Lemma~\ref{lem:six}(b) implies that $r$ divides $\lambda d$ for all nontrivial subdegrees $d$ of $G$. Thus $\lambda(v-1)/\gcd(k-1,\lambda(v-1))$ divides $\lambda d$, and hence $v-1$ divides $\gcd(k-1,\lambda(v-1)) d$, for all nontrivial subdegrees $d$ of $G$. Moreover, since $d\leq |H|$ and $v=|G|/|H|$, the inequality \eqref{eq:order} holds.   
\end{proof}

\section{Proof of Theorem~\ref{thm:alt}.}\label{sec:thm}

Suppose that $\Dmc=(\Pmc,\Bmc)$ is a $2$-$(v,k,\lambda)$ design admitting flag-transitive and point-primitive automorphism group $G$ with socle $X$ an alternating group $\A_{c}$ of degree $c\geq 5$ on $\Omega=\{1,\ldots,c\}$ and that $H:=G_{\alpha}$ with $\alpha\in \Pmc$. Then $H$ is maximal in $G$  by \cite[Corollary 1.5A]{b:Dixon}, and since $G=HX$, we conclude that
\begin{align}
    v=\frac{|X|}{|H\cap X|}.\label{eq:v}
\end{align}

If $v< 100$, then by \cite[Theorem 1.2]{a:Tang-v100,a:Braic-255}, we obtain two $2$-designs: one is a  $2$-design with parameters $(55, 7, 1680)$ with  $G=\A_{11}, \S_{11}$, and the other one is $\PG_{2}(3,2)$ with parameters $(15,7,3)$, and $G=\A_7$  with the point-stabiliser $H=\PSL_3(2)$. We observe by \cite{a:Regueiro-alt-spor,a:Zhou-lam2-nonsym-An,a:Zhuo-CP-sym-alt,a:Zhou-CP-nonsym-alt} that there is no example of $2$-design with $\lambda=2$ or $\gcd(r,\lambda)=1$ admitting a flag-transitive alternation automorphism group.
Therefore, we assume that $v\geq 100$, $\lambda\geq 3$ and $\gcd(r,\lambda)\neq 1$. This in particular shows that $r=\lambda(v-1)/(k-1)\geq 99/2$, and hence $r\geq 49$. If $|H|\leq 6$, then by \eqref{eq:order}, we observe that $|G|\leq 6^3+6=222$ implying that $G=\A_{5}$ or $\S_{5}$, then by \cite[p.2]{b:Atlas}, we conclude that $v\leq 10$ which has already been considered. Therefore, we can assume that $|H|\geq 7$, and hence \eqref{eq:order} implies that $|G|\leq |H|^{3}$. 
Let $H_{0}:=H\cap X$. Then by \cite[Theorem 2 and Proposition 6.1]{a:AB-Large-15}, one of the following holds:
\begin{enumerate}[\rm \quad (i)]
    \item   $H_{0}$ is intransitive on $\Omega=\{1,\ldots,c\}$;
    \item   $H_{0}$ is transitive and imprimitive on $\Omega=\{1,\ldots,c\}$;
    \item   $G=\S_c$ and $(c,H)$ is one of the following:
    \begin{align*}
        \begin{array}{llll}
            \left(5,\AGL_1(5)\right), &  \left(6,\PGL_2(5)\right), &  \left(7,\AGL_1(7)\right), &  \left(8,\PGL_2(7)\right), \\
            \left(9,\AGL_2(3)\right), & \left(10,\A_6{\cdot}2^{2}\right), &  \left(12,\PGL_2(11)\right);
        \end{array}
    \end{align*}
    \item  $G=\A_6{\cdot}2=\PGL_2(9)$ and $H$ is $\D_{20}$ or a Sylow $2$-subgroup $P$ of $G$ of order $16$;
    \item  $G=\A_6{\cdot}2=\M_{10}$ and $H$ is $\AGL_1(5)$ or a Sylow $2$-subgroup $P$ of $G$ of order $16$;
    \item  $G=\A_6{\cdot}2^{2}=\PGaL_2(9)$ and $H$ is $\AGL_1(5){\times} 2$ or a Sylow $2$-subgroup $P$ of $G$ of order $32$;
    \item  $G=\A_{c}$ and $(c,H)$ is one of the following:
    \begin{align*}
        \begin{array}{llll}
            (5,\D_{10}), & (6,\PSL_2(5)), & (7,\PSL_2(7)), & (8,\AGL_3(2)), \\
            (9,3^{2}{\cdot}\SL_2(3)), & (9,\PGaL_2(8)), & (10,\M_{10}), & (11,\M_{11}), \\
            (12,\M_{12}), & (13,\PSL_3(3)), & (15,\A_8), & (16,\AGL_4(2)), \\
            (24,\M_{24}).
        \end{array}
    \end{align*}
\end{enumerate} 

For the cases (iii)-(vii),  it is easy to see that the only possibilities $(G,H)$ with $v\geq 100$ satisfying \eqref{eq:order}  are $(\S_{8},\PGL_{2}(7))$ and $(\A_{9},\PGaL_{2}(8))$ for $v=120$, and  $(\A_{11},\M_{11})$ and $(\A_{12},\M_{12})$ for $v=2520$. These cases cannot occur as for each of these possibilities, the parameter $b$ is a divisor of $|G|$, and for each such $b$, and for $v\leq b$, we cannot find any parameters $r$ and $\lambda$ satisfying Lemma~\ref{lem:param}(a). Therefore, $H_{0}$ is either intransitive, or imprimitive.\smallskip 

\noindent \textbf{(i)} Suppose that $H_{0}=(\S_s \times \S_{c-s})\cap \A_{c}$ is intransitive on $\Omega=\{1,\ldots,c\}$ with $1\leq s< c/2$. In this case, $H=(\S_s \times \S_{c-s})\cap G$. Note that $H$ is maximal in $G$ as long as $s\neq c-s$. 
Note also that $H_{0}$ contains all the even permutations of $H$, and hence
$H_{0}=H$ if $G=\A_{c}$, or the index of $H_{0}$ in $H$ is $2$ if $G=\S_c$. Since $G$ is flag-transitive, $H$ is transitive on the set of blocks  passing through $\alpha$. Hence $H$ fixes exactly one point in $\Pmc$, and so it stabilises exactly one $s$-subset, say $\Delta$, in $\Omega$. Therefore, we can identify the point $\alpha$ of $\Pmc$ with the unique $s$-subset $\Delta$ of $\Omega$ stabilised by $H$. Thus $v=\binom{c}{s}$. Since $H_{0}$ acting on $\Omega$ is intransitive, it has at least two orbits. According to \cite[p. 82]{a:Delandsheer-Lin-An-2001}, two points of $\Pmc$ are in the same orbit under $H_{0}$ if and only if the corresponding $s$-subsets $\Delta_{1}$ and $\Delta_{2}$ of $\Omega$ intersect $S$ in the same number of points. Thus $G$ acting on $\Pmc$ has rank $s+1$, and each  $H_{0}$-orbit $\Omc_{i}$ on  $\Pmc$ corresponds to a possible size $i\in \{0,1,\ldots, s\}$ and these are precisely the families of $s$-subsets of $\Omega$ that intersect $S$, see also \cite[Proposition 2.5]{a:Alavi-intrans}. Then if $d_i$ is the length of a $G$-orbit on $\Pmc$, then $d_0=1$, and $d_j=\binom{s}{j-1} \binom{c-s}{s-j+1}$ when $G=\A_{c}$ or $d_j= \binom{s}{j-1} \binom{c-s}{s-j+1}/2$ when $G=\S_c$ for $j=1,\ldots,s$. 

By Corollary~\ref{cor:subdeg}, we have that $v-1$ divides $\gcd(6,\lambda(v-1)) d_j$ for all nontrivial subdegrees $d_j$ of $G$. In particular, if we take $j=s$, then $v-1$ divides $\gcd(6,\lambda(v-1))\cdot s(c-s)$, and so $v-1\leq\gcd(6,\lambda(v-1))\cdot s(c-s)$, and hence
\begin{align*}
    v=\binom{c}{s}\leq 6s(c-s)+1.
\end{align*}
Set $t:=c-s$. Thus
\begin{align}\label{eq:int-s}
    \binom{s+t}{s}\leq 6st+1.
\end{align}

Let $s\geq 4$. If $t\geq 10$, then since $t>s$, we observe that $(t+1)^4/24>6t^2+1>6st+1$, and so $\binom{s+t}{s}\geq \binom{t+4}{4}=(t+1)(t+2)(t+3)(t+4)/24\geq(t+1)^4/24>6st+1$, which violates \eqref{eq:int-s}. Moreover, the inequality \eqref{eq:int-s} does not hold for $9\geq t>s\geq 4$. Therefore, $s=1,2,3$.  

If $s=1$, then $v=c\geq 5$. Note that $G$ is $(v-2)$-transitive on $\Pmc$. Since $2<k\leq v-2$, $G$ acts $k$-transitively on $\Pmc$. Then $b=|\Bmc|=|B^G|=\binom{c}{k}=\binom{v}{k}$, that is to say, $\Dmc$ is complete. 

If $s=2$, then $v=c(c-1)/2$ and by Corollary~\ref{cor:subdeg} and as noted above, $v-1$ divides $\gcd(6,\lambda(v-1))\cdot 2(c-2)$, and so there exists a positive integer $m$ such that $m[c(c-1)-2]=24(c-2)$. Thus $c(c-2)< mc(c-2)<m[c(c-1)-2]=24(c-2)$, and  hence $c\leq 23$. For these values of $c$, since $v-1$ divides $\gcd(6,\lambda(v-1))\cdot 2(c-2)$, we have that $v-1$ divides $12(c-2)$, and considering the fact that $v\geq 100$, we conclude that $(c,v)$ is one of the pairs 
$(10,109)$,
$(11,121)$,
$(12,133)$,
$(13,145)$,
$(14,157)$,
$(15,169)$,
$(16,181)$,
$(17,193)$,
$(18,103)$,
$(18,205)$,
$(19,109)$,
$(19,217)$,
$(20,115)$,
$(20,229)$,
$(21,121)$,
$(21,241)$,
$(22,127)$,
$(22,253)$,
$(23,133)$,
$(23,265)$. But none of these possibilities satisfies $v=c(c-1)/2$, which is a contradiction. 

If $s=3$, then $v=c(c-1)(c-2)/6$, and so Corollary~\ref{cor:subdeg} implies that $v-1$ divides $\gcd(6,\lambda(v-1))\cdot 3(c-3)$, and so $c(c-1)(c-2)-6<6\cdot18(c-3)$, and since $v\geq 100$, it follows that $(c,v)=(10,120)$, but then $v-1=119$ does not divide $18(c-3)=126$, which is a contradiction.\smallskip

\noindent \textbf{(ii)} Suppose now that $H_{0}$ is transitive and imprimitive on $\Omega=\{1,\ldots,c\}$. In this case, $H=(\S_s \wr \S_{c/s})\cap G$ is imprimitive, where $s$ divides $c$ and $2\leq s\leq c/2$. 
Indeed, $H_{0}$ is transitive and imprimitive on $\Omega=\{1,\ldots,c\}$, $H_{0}$ acting on $\Omega$ preserves a partition $\Sigma$ of $\Omega$ into $t$ classes of size $s$ with $t\geq 2$, $s\geq 2$ and $c=st$. Thus $H_{0} \leq G_\Sigma<G$. Since $G$ is isomorphic to $\S_c$ or $\A_{c}$ and since both natural actions of $G$ and $X$ on $\Omega$ are primitive, we conclude that $H_{0}$ contains all the even permutations of $\Omega$ preserving the partition $\Sigma$. By the same argument as in \cite[Case 2]{a:Delandsheer-Lin-An-2001}, \cite[(3.2)]{a:Regueiro-alt-spor} and \cite[p. 1489-1490]{a:Zhuo-CP-sym-alt}, the imprimitive partition  $\Sigma$ is the only nontrivial partition of $\Omega$ preserved by $H_{0}$. Since $X$ acts transitively on all the partitions of $\Omega$ into $t$ classes of size $s$, we can identify the points of $\Dmc$ with the partitions of $\Omega$ into $t$ classes of size $s$, and so $v=\binom{ts}{s}\binom{(t-1)s}{s} \cdots\binom{3s}{s}\binom{2s}{s}/(t!)$, that is to say,
\begin{align}\label{eq:alt-imp-v}
    v=\binom{ts-1}{s-1}\binom{(t-1)s-1}{s-1}\cdots \binom{3s-1}{s-1}\binom{2s-1}{s-1}.
\end{align}

We note that the suborbits of $G$ on $\Omega$ can be described by the notion of $j$-cyclics introduced in \cite[p. 84]{a:Delandsheer-Lin-An-2001}. Indeed, if  a partition $\Sigma_{1}$ of $\Omega$ is a point of $\Pmc$, then 
for $j=2,\ldots,t$, the set $\Gamma_j$ of $j$-cyclic partitions with respect to $\Sigma_{1}$ is a union of $H$-orbits on $\Pmc$, see \cite[Case 2]{a:Delandsheer-Lin-An-2001} and \cite[p. 1490-1491]{a:Zhuo-CP-sym-alt}. Therefore, by Corollary~\ref{cor:subdeg}, we have that  $v-1$ divides $\gcd(6,\lambda(v-1))\cdot d_{s}$, where
\begin{align}\label{eq:subdegs-An}
    d_{s} = \left.
    \begin{cases}
        s^{2}\binom{t}{2}, & \text{ if } s\geq 3; \\
        t(t-1), & \text{ if } s=2.
    \end{cases}
    \right.	
\end{align}

If $s=2$, then $t\geq3$ as $c=st\geq5$. By \eqref{eq:alt-imp-v},  we have that $v=\prod_{i=0}^{t-2}[2t-(2i+1)]$ and since $v-1$ divides $\gcd(6,\lambda(v-1)) d_{2}=\gcd(6,\lambda(v-1))\cdot t(t-1)$, it follows that
\begin{align*}
    \prod_{i=0}^{t-2}[2t-(2i+1)]-1\leq 6 t(t-1),
\end{align*}
which is true when $t=2,3$, and so $v=3,15$, respectively, which is a contradiction. 

If $s\geq 3$, then since, 
\begin{align*}
    \binom{is-1}{s-1}=\frac{is-1}{s-1}\cdot \frac{is-2}{s-2}\cdots \frac{is-(s-1)}{1}>i^{s-1}
\end{align*} 
with $2\leq i\leq t$, by \eqref{eq:alt-imp-v}, we conclude that $v>t^{(s-1)(t-1)}$. Since also $v-1$ divides $\gcd(6,\lambda(v-1)) d_s=\gcd(6,\lambda(v-1)) s^{2}\binom{t}{2}$, we deduce by Corollary~\ref{cor:subdeg} that
\begin{align*}
    t^{(s-1)(t-1)}\leq 3 s^2t(t-1).
\end{align*}
Thus $t^{(s-1)(t-1)-2}<3s^2$, which is true when $(s,t)=(3,3)$ or $t=2$ and $s=3,\ldots,11$. For each such pair $(s,t)$, the fact that $v-1$ divides $6 s^{2}\binom{t}{2}$ implies that $(s,t)=(3,2)$ in which $v=10$, which is a contradiction. This completes the proof.

\section*{Acknowledgements}

The author is grateful to Alice Devillers and Cheryl E. Praeger for supporting her visit to UWA (The University of Western Australia) during February–June 2023. She also thanks Bu-Ali Sina University for the support during her sabbatical leave. 





\end{document}